\numberwithin{equation}{section}
\theoremstyle{plain}
\newtheorem{theorem}{Theorem}[section]
\newtheorem{lemma}[theorem]{Lemma}
\newtheorem{proposition}[theorem]{Proposition}
\theoremstyle{plain}
\numberwithin{equation}{section}
\theoremstyle{remark}
\DeclareMathOperator{\supp}{supp}
\DeclareMathOperator{\diag}{diag}
\begin{document}
\date{}

\title
[An optimal stretching problem]{Lattice points in stretched model domains of finite type in $\mathbb{R}^d$}

\author{Jingwei Guo}
\address{School of Mathematical Sciences\\
University of Science and Technology of China\\
Hefei, 230026\\ P.R. China\\} \email{jwguo@ustc.edu.cn}
\author{Weiwei Wang}
\address{School of Mathematical Sciences\\
University of Science and Technology of China\\
Hefei, 230026\\ P.R. China\\} \email{wawnwg123@163.com}

\thanks{J.G. is  partially supported by the NSFC Grant No. 11571131 and 11501535.}

\begin{abstract}
We study an optimal stretching problem for certain convex domain in $\mathbb{R}^d$ ($d\geq 3$) whose boundary has points
of vanishing Gaussian curvature. We prove that the optimal domain which contains the most positive (or least nonnegative) lattice points is asymptotically balanced. This type of problem has its origin in the ``eigenvalue optimization among rectangles'' problem in spectral geometry. Our proof relies on two-term bounds for lattice counting for general convex domains in $\mathbb{R}^d$ and an explicit estimate of the Fourier transform of the characteristic function associated with the specific domain under consideration.
\end{abstract}

\subjclass[2010]{Primary 11H06, 11P21, 52C07. Secondary 42B10.}

\keywords{Lattice points, optimal stretching, finite type domains.}

\maketitle

\section{Introduction}\label{introduction}

Let $\Omega \subset \mathbb R^d$ be an Euclidean domain with sufficiently smooth boundary. A coordinate \emph{stretch} of $\Omega$ is a domain of the form
\begin{equation}
\label{StretchOmega}
A\Omega := \{(a_1x_1, \ldots, a_d x_d)\ |\ (x_1, \ldots, x_d) \in \Omega\},
\end{equation}
where $a_1, \ldots, a_d$ are positive numbers, and $A=\diag(a_1, \ldots, a_d)$ is the associated $d \times d$  diagonal matrix. Such a stretch is called a volume-preserving stretch if the stretching factors $a_1, \ldots, a_d$ satisfy $a_1 \cdots a_d=1$, or in other words, $\det A=1$. In what follows we will always assume that the stretch $A\Omega$ is a volume-preserving stretch, and we will denote
\begin{equation}
a_* =\|A^{-1}\|_\infty =\max\{a_1^{-1}, \ldots, a_d^{-1}\}.\label{aStar}
\end{equation}
Since $|A\Omega|=|\Omega|$, by a simple geometric argument dated back to Gauss, it is easily seen that the number of lattice points in the enlarged domain, $tA\Omega$, is equal to $|\Omega|t^d + O(t^{d-1})$ as $t \to \infty$. Note that although the leading term in this asymptotic formula  is independent of the \emph{stretching factor} $A$, the remainder term does depend on the shape of the domain and thus depends on the stretching factor $A$.

The problem of estimating the \emph{growth rate} of the remainder term
\[\#(\mathbb Z^d \cap t\Omega)-|\Omega|t^d\]
as $t \to \infty$ is now widely known as the \emph{lattice point problem}, and has been studied extensively for over one hundred years. This is a highly nontrivial problem. Even for the unit disk in the plane, in which case the problem is the famous \emph{Gauss circle problem}, we are still far away from the conjectured optimal growth rate. For historical results on the lattice point problem, see for example \cite{survey2004}, \cite{nowak14survey}, etc.

Motivated by the ``eigenvalue optimization among rectangles'' problem in spectral geometry, in the paper \cite{AF12} P. Antunes and P. Freitas studied  the following variation of the Gauss circle problem: find the ``optimal stretching factor" that maximizes the remainder term
\[
\#(\mathbb{N}^2 \cap tA\mathcal{B}) - \pi t^2/4,
\]
(\footnote{In this paper we use $\mathbb{N}=\{1, 2, \ldots\}$, $\mathbb{Z}_{+}=\{0, 1, 2, \ldots\}$, $\mathbb{Z}_{*}^{d}=\mathbb{Z}^{d}\setminus\{0\}$, and $\mathbb{R}_{+}=[0, \infty)$.})where $\mathcal{B}=B(0, 1)$ is the unit disk in $\mathbb R^2$, and $A=\diag(s, s^{-1})$. Note that they only count \emph{positive} lattice points, i.e. integer points in the first quadrant. This is because they are considering corresponding Laplace eigenvalues with Dirichlet boundary condition. Although there is no explicit way to determine the ``best" stretching factor $s$ (not necessarily unique) for each $t$, they showed that as $t \to \infty$, the ``best factor" $s=s(t) \to 1$. In other words, among all ellipses of the same area, those that enclose the most lattice points in the first quadrant must be more and more ``round'', as the area goes to infinity.

In the last couple of years, many people started to study the above type of optimal stretching problem (for ellipses or ellipsoids) and the associated shape optimization problem for eigenvalues. See for example \cite{BuG}, \cite{BeG}, \cite{GL}, etc.

Recently, in a pair of papers R. Laugesen and S. Liu~\cite{LL16} and S. Ariturk and R. Laugesen~\cite{AL} extended P. Antunes and P. Freitas' results to very general planar domains including $p$-ellipses for $0<p<\infty$, $p\ne 1$. They showed, among others, that under mild assumptions on a given domain's boundary the most ``balanced'' domain will enclose the most positive (or least nonnegative) lattice points in the limit, where a domain in $\mathbb{R}^d$ ($d\geq 2$) (\footnote{For the sake of later usage, here we give the definition for any dimension $d\geq 2$.}) is said to be \emph{balanced} if the $(d-1)$-dimensional measures of the intersections of the domain with each coordinate hyperplane are equal. Furthermore they provided rates of convergence for optimal stretching factors.

Further generalizations to high dimensions have been investigated by N. Marshall \cite{marshall},  in which they focus on any bounded convex domain $\Omega\subset \mathbb{R}^d$ whose boundary $\partial \Omega$ is $C^{d+2}$ and has nowhere vanishing Gaussian curvature. (In Proposition \ref{2term-prop} below we will prove two-term bounds for lattice counting for general strictly convex domains, by using which one can recover N. Marshall's main theorem in \cite{marshall}.)


The class of domains studied in \cite{marshall} is quite general, though it does not include some interesting domains having boundary points of vanishing Gaussian curvature, e.g. the super sphere
\begin{equation*}
\{x\in \mathbb{R}^d :
|x_1|^{\omega}+|x_2|^{\omega}+\cdots+|x_d|^{\omega}\leq  1\},\quad \textrm{$\omega\geq 3$},
\end{equation*}
a typical domain of finite type (in the sense of J. Bruna, A. Nagel, and S. Wainger~\cite{BNW}) in $\mathbb{R}^d$. In fact, even the classical lattice point problem for domains having boundary points of vanishing Gaussian curvature is not well understood, especially in high dimensions. For partial results we refer interested readers to  \cite{survey2004}, \cite{nowak14survey}, \cite{guo1}, \cite{guo2}, and the references given there.

We remark that if the boundary is ``too flat'' the optimal stretching problem could be very subtle. For example, the case of triangles has been analyzed  by N. Marshall and S. Steinerberger in \cite{MS}, where they showed that the optimal shape needs not to be asymptotically balanced.

So a natural question is to study the optimal stretching problem for domains of finite type. In this paper we will explore this problem for the following model domain of finite
type in $\mathbb{R}^d$ ($d\geq 3$)
\begin{equation}
\mathcal{D}=\{x\in \mathbb{R}^d : x_1^{\omega_1}+\cdots+x_d^{\omega_d}\le1\}\label{domainD}
\end{equation}
with $\omega_1,\ldots,\omega_d\in 2\mathbb{N}$. For future purpose we will denote
\[\omega=\max(\omega_1, \ldots, \omega_d).\] For each $1 \le j \le d$, we let $\mathcal{D}_j$ be the intersection of $\mathcal{D}$ with the coordinate hyperplane defined by $x_j=0$.
For each $t\geq 1$ we let
\begin{equation}
A(t)=\mathrm{argmax}_A\#(\mathbb{N}^d\cap tA\mathcal{D}),\label{def-At}
\end{equation}
where the argmax  ranges over all positive definite diagonal matrices $A$ of determinant 1. We recall that the notation  $\mathrm{argmax}_x f(x)$ means the values of $x$ for which $f(x)$ attains the function's largest value. So in our setting, $A(t)$ means the stretching matrix $A$ that makes the set $tA\mathcal D$ contain most positive lattice points. Note that in general such $A$ is not unique, i.e. there could be a set of ``optimal factors''. So in what follows, when we write $A(t)=\diag(a_1(t),a_2(t),\ldots,a_d(t))$, we really mean that $A(t)$ is any stretching factor that maximizes the lattice counting function $\#(\mathbb{N}^d\cap tA\mathcal{D})$.

Our main theorem is

\begin{theorem}\label{main-thm}
Let $\mathcal{D}$ be the domain defined by \eqref{domainD}. Then any maximal stretching factor $A(t)=\diag(a_1(t),a_2(t),\ldots,a_d(t))$ in \eqref{def-At} satisfies
\begin{equation}
\left|a_j(t)-\frac{|\mathcal{D}_j|}{\sqrt[\leftroot{-3}\uproot{3}d]{|\mathcal{D}_1||\mathcal{D}_2|\cdots|\mathcal{D}_d|}}\right|=O(t^{-\gamma}), \qquad 1 \le j \le d, \label{conv-rate}
\end{equation}
where $|\mathcal D_j|$ is the $(d-1)$-dimensional measure of $\mathcal D_j$ whose formula is given in Appendix \ref{volume}, and $\gamma=\min\{(d-1)/(2\omega),(d-1)/(2d+2)\}$.
\end{theorem}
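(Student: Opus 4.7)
Set $N^+(t,A):=\#(\mathbb{N}^d\cap tA\mathcal{D})$. My approach is to combine a uniform two-term asymptotic for $N^+(t,A)$ with a quantitative analysis of the optimization of its $t^{d-1}$ coefficient. Using the coordinatewise symmetry of $\mathcal{D}$, inclusion--exclusion gives
\[
N^+(t,A) = \frac{1}{2^d}\sum_{S\subseteq\{1,\dots,d\}}(-1)^{|S|}N_S(t,A),
\]
where $N_S(t,A)$ counts lattice points of $tA\mathcal{D}$ whose coordinates indexed by $S$ vanish. Applying Proposition~\ref{2term-prop} to $N_\emptyset$ and the standard leading-term asymptotic to each $N_{\{j\}}$ (a $(d-1)$-dimensional count on a stretched copy of $\mathcal{D}_j$, whose leading term equals $t^{d-1}a_j^{-1}|\mathcal{D}_j|$ because $\det A=1$), with crude $O(t^{d-2})$ bounds absorbing the $|S|\geq 2$ terms, should yield
\[
N^+(t,A) = \frac{t^d|\mathcal{D}|}{2^d} - \frac{t^{d-1}}{2^d}\sum_{j=1}^d a_j^{-1}|\mathcal{D}_j| + E(t,A),
\]
with $|E(t,A)| = O(t^{d-1-2\gamma})$ uniform in $A$. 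Since $2\gamma<1$ in all admissible cases ($d\geq 3$), this error dominates the inclusion--exclusion remainders.

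\textbf{Optimization of the leading coefficient and transfer to $A(t)$.} Let $F(A) = \sum_{j=1}^d a_j^{-1}|\mathcal{D}_j|$ restricted to $\{a_j>0,\,\prod_ja_j=1\}$. A Lagrange multiplier / AM--GM calculation shows $F$ has a unique minimizer $A^\star=\diag(a_1^\star,\dots,a_d^\star)$ with
\[
a_j^\star = \frac{|\mathcal{D}_j|}{(|\mathcal{D}_1|\cdots|\mathcal{D}_d|)^{1/d}},
\]
precisely the target value in~\eqref{conv-rate}. A second-order Taylor expansion on the constraint manifold yields a positive-definite Hessian, hence the local quadratic lower bound $F(A) - F(A^\star) \geq c\sum_j(a_j - a_j^\star)^2$. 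From the maximizing property of $A(t)$ combined with the two-term expansion,
\[
\frac{t^{d-1}}{2^d}\bigl(F(A(t)) - F(A^\star)\bigr) \leq E(t,A^\star) - E(t,A(t)) = O(t^{d-1-2\gamma}),
\]
so $F(A(t)) - F(A^\star) = O(t^{-2\gamma})$. A short preliminary compactness argument confines $A(t)$ to a neighborhood of $A^\star$ (if some $a_j(t)\to 0$ or $\infty$ along a subsequence, $F(A(t))\to\infty$ contradicting the bound above), so the quadratic lower bound applies and delivers $|a_j(t)-a_j^\star| = O(t^{-\gamma})$ for each $j$.

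\textbf{Main obstacle.} The technical heart is producing the error exponent $2\gamma = \min\{(d-1)/\omega,\,(d-1)/(d+1)\}$ in $E(t,A)$ uniformly in $A$. This requires a bifurcated analysis of $\widehat{\chi_{A\mathcal{D}}}(\xi)$: on the smooth portion of $\partial(A\mathcal{D})$ (non-vanishing Gaussian curvature), classical stationary phase gives decay $|\xi|^{-(d+1)/2}$, feeding the Hlawka-type rate $(d-1)/(d+1)$; on the pinched boundary points lying on the coordinate axes, the Gaussian curvature vanishes to order controlled by $\omega=\max_j\omega_j$, producing a slower decay corresponding to the rate $(d-1)/\omega$. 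Feeding the minimum of these two decay rates into the truncated Poisson summation machinery that powers Proposition~\ref{2term-prop} should deliver the claimed uniform bound on $E(t,A)$. Uniformity in $A$ across the relevant compact range is essential, since otherwise the comparison $E(t,A(t)) - E(t,A^\star) = O(t^{d-1-2\gamma})$ used in the transfer step cannot be justified; this uniformity forces us to track the $A$-dependence of all implicit constants in the Fourier estimates.
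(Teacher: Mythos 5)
Your overall strategy coincides with the paper's: a two-term expansion of $\#(\mathbb{N}^d\cap tA\mathcal{D})$ via inclusion--exclusion, optimization of the coefficient $\sum_j a_j^{-1}|\mathcal{D}_j|$ under $\prod_j a_j=1$ (whose minimizer is the balanced $A^\star$ and whose quadratic nondegeneracy produces the square-root loss $\gamma=\frac12\cdot 2\gamma$, exactly the content of Lemma~\ref{elem-lemma}), and a comparison of $A(t)$ with $A^\star$. However, there is a genuine gap in how you confine $A(t)$ to a compact set. Your claim that $|E(t,A)|=O(t^{d-1-2\gamma})$ \emph{uniformly in all} $A$ is false: the Poisson-summation error terms necessarily carry powers of $a_*=\|A^{-1}\|_\infty$ (in the paper they are $a_*^{1+\frac{d-1}{\omega}}t^{(d-1)(1-\frac{1}{\omega})}+a_*^{2-\frac{2}{d+1}}t^{d-2+\frac{2}{d+1}}$), and for eccentric stretches with $a_*\sim t$ these exceed $t^{d-1}$ and even $t^d$. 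Your proposed fix --- ``if some $a_j(t)\to0$ or $\infty$ then $F(A(t))\to\infty$, contradicting $F(A(t))-F(A^\star)=O(t^{-2\gamma})$'' --- is circular, because that very inequality was derived from the transfer step $E(t,A^\star)-E(t,A(t))=O(t^{d-1-2\gamma})$, which presupposes that $a_*(t)$ is already bounded.

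The paper closes this loop with Proposition~\ref{2term-prop}, which you cite but misapply: it is not an ingredient for the term $N_\emptyset=\#(\mathbb{Z}^d\cap tA\mathcal{D})$ in the inclusion--exclusion (that role is played by Proposition~\ref{latticeInWhole}), but rather an elementary, Fourier-free, convexity-based \emph{bound} $\#(\mathbb{N}^d\cap tA\mathcal{D})\leq 2^{-d}|\mathcal{D}|t^d-ca_*t^{d-1}$ valid for \emph{every} $A$ with $t/a_*\geq1$. Comparing this upper bound at $A=A(t)$ with the lower bound $\#(\mathbb{N}^d\cap tB\mathcal{D})\geq 2^{-d}|\mathcal{D}|t^d-Ct^{d-1}$ at the fixed balanced matrix $B$ (where the full asymptotic \eqref{numpositive} does apply) forces $a_*(t)\leq C/c$ \emph{before} any uniform asymptotics are needed; only then is \eqref{numpositive} invoked for $A(t)$ with bounded $a_*$, yielding $\sum_j a_j(t)^{-1}|\mathcal{D}_j|(\prod_i|\mathcal{D}_i|)^{-1/d}\leq d+O(t^{-2\gamma})$ and hence the theorem via Lemma~\ref{elem-lemma}. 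To repair your argument you need either this a priori penalization of large $a_*$ or some other non-circular mechanism ruling out degenerate stretches; the rest of your outline (including the role of the curvature-degenerate axis points in producing the $(d-1)/\omega$ exponent via the bound \eqref{FT-remark}) is consistent with the paper.
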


Similarly we consider the optimal stretching problem for nonnegative lattice points. For each $t\geq 1$ we let
\begin{equation}
\widetilde{A}(t)=\mathrm{argmin}_{\widetilde A}\#(\mathbb{Z}_+^d\cap t\widetilde A\mathcal{D}),\label{def-At222}
\end{equation}
where the argmin  ranges over all positive definite diagonal matrices $\widetilde A$ of determinant 1. With the same understanding as above,  when we write $\widetilde{A}(t)=\diag(\tilde a_1(t), \tilde a_2(t),\ldots, \tilde a_d(t))$, we really mean that $\widetilde{A}(t)$ is any stretching factor that minimizes the lattice counting function $\#(\mathbb{Z}_+^d\cap t\widetilde A\mathcal{D})$. Then we have

\begin{theorem}\label{min-thm}
Let $\mathcal{D}$ be the domain defined by \eqref{domainD}. Then any minimal stretching factor $\widetilde{A}(t)=\diag(\tilde a_1(t), \tilde a_2(t),\ldots, \tilde a_d(t))$ in \eqref{def-At222} satisfies
\begin{equation*}
\left|\tilde a_j(t)-\frac{|\mathcal{D}_j|}{\sqrt[\leftroot{-3}\uproot{3}d]{|\mathcal{D}_1||\mathcal{D}_2|\cdots|\mathcal{D}_d|}}\right|=O(t^{-\gamma}),  \qquad 1 \le j \le d,
\end{equation*}
where $\gamma=\min\{(d-1)/(2\omega),(d-1)/(2d+2)\}$.
\end{theorem}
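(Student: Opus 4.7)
The plan is to follow the proof of Theorem \ref{main-thm} almost verbatim after establishing a two-term asymptotic for $\#(\mathbb{Z}_+^d \cap tA\mathcal{D})$ whose $t^{d-1}$ boundary correction has the \emph{opposite sign} from the one appearing in $\#(\mathbb{N}^d \cap tA\mathcal{D})$. Since the boundary correction depends on $A$ only through $\sum_j a_j^{-1}|\mathcal{D}_j|$ and the constraint is $\prod_j a_j = 1$, both the maximization in Theorem \ref{main-thm} and the minimization here will reduce to the \emph{same} constrained optimization, and will therefore single out the same balanced $A$.

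To derive this asymptotic, I would exploit that each $\omega_j$ is even, so $\mathcal{D}$ is symmetric about every coordinate hyperplane. Partitioning $\mathbb{Z}_+^d$ by the subset $S$ of vanishing coordinates yields the exact identity
\begin{equation*}
\#(\mathbb{Z}_+^d \cap tA\mathcal{D}) = \sum_{S \subseteq \{1,\ldots,d\}} \#(\mathbb{N}^{|S^c|} \cap tA_{S^c}\mathcal{D}^S),
\end{equation*}
where $\mathcal{D}^S := \mathcal{D} \cap \{x_j = 0 : j \in S\}$ and $A_{S^c}$ is the diagonal submatrix indexed by $S^c$. The $S = \emptyset$ term equals $\#(\mathbb{N}^d \cap tA\mathcal{D})$; applying the two-term asymptotic used in the proof of Theorem \ref{main-thm} to it and its $(d-1)$-dimensional analogue to each $|S|=1$ cross-section count $\#(\mathbb{N}^{d-1} \cap tA_{\{j\}^c}\mathcal{D}_j)$, and crudely bounding the $|S|\ge 2$ terms by $O(a_*^{C} t^{d-2})$, yields
\begin{equation*}
\#(\mathbb{Z}_+^d \cap tA\mathcal{D}) = \frac{|\mathcal{D}|}{2^d}\,t^d \; + \; \frac{1}{2^d}\sum_{j=1}^d a_j^{-1}|\mathcal{D}_j|\,t^{d-1} \; + \; O(t^{d-1-\gamma}),
\end{equation*}
uniformly in $A$ over the relevant range. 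The crucial feature is the $+$ sign in front of the middle term (as opposed to the $-$ sign in the expansion for $\#(\mathbb{N}^d \cap tA\mathcal{D})$): the boundary lattice points lying on the coordinate hyperplanes are now \emph{added} rather than removed.

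From this point the argument mirrors Theorem \ref{main-thm} precisely. Minimizing the right-hand side over $A$ with $\det A = 1$ is, to leading order, the same constrained optimization problem as in Theorem \ref{main-thm}, whose unique solution by AM-GM (or Lagrange multipliers) is $a_j^{*} = |\mathcal{D}_j|/\sqrt[d]{|\mathcal{D}_1|\cdots|\mathcal{D}_d|}$. Quantitative stability of this strictly convex optimum converts the $O(t^{d-1-\gamma})$ remainder into the rate $O(t^{-\gamma})$ for any minimizer. The main obstacle, identical to that in Theorem \ref{main-thm}, is ensuring that the implicit constant in the two-term asymptotic is controlled in $a_* = \|A^{-1}\|_\infty$, since $\widetilde{A}(t)$ is only known a posteriori. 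This is resolved by a preliminary compactness step showing that any sufficiently near-minimizer must have $a_*(\widetilde{A}(t))$ uniformly bounded, so that the asymptotic is effective where it needs to be applied.
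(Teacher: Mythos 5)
Your proposal follows essentially the same route as the paper: partitioning $\mathbb{Z}_+^d$ by the set of vanishing coordinates is a cosmetic variant of the inclusion--exclusion the paper uses in Theorem \ref{latticeInpositive} to obtain exactly your expansion with the $+$ sign (see \eqref{numnonnega}), and the reduction of both the max and the min problems to minimizing $\sum_j a_j^{-1}|\mathcal D_j|$ under $\prod_j a_j=1$, followed by quantitative AM--GM stability (Lemma \ref{elem-lemma}), is precisely the paper's argument for Theorem \ref{main-thm} with the inequality reversed. Two points to tighten. First, the uniform boundedness of $\tilde a_*(t)$ is not really a ``compactness step'': the admissible set of matrices is noncompact, and what is actually needed is the quantitative two-term lower bound \eqref{ccclowwer} of Proposition \ref{2term-prop}, namely $\#(\mathbb{Z}_+^d\cap tA\mathcal D)\ge 2^{-d}|\mathcal D|t^d+c\,a_*t^{d-1}$; comparing this, for $A=\widetilde A(t)$, against the count for the balanced matrix $B$ (which is at most $2^{-d}|\mathcal D|t^d+Ct^{d-1}$ by \eqref{numnonnega}) forces $\tilde a_*(t)\le C/c$. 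That lower bound is the content of an entire section of the paper, so you should invoke it explicitly rather than gesture at compactness, which by itself gives no uniform-in-$t$ control. Second, a bookkeeping slip: the remainder in the two-term asymptotic is $O(t^{d-1-2\gamma})$, not $O(t^{d-1-\gamma})$, so that the excess $\varepsilon$ fed into the stability lemma is $O(t^{-2\gamma})$ and its square root yields the claimed $O(t^{-\gamma})$; as literally written your exponents would only give $O(t^{-\gamma/2})$.
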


Proofs of these two theorems are essentially the same. We follow the framework of P. Antunes and P. Freitas~\cite{AF12}, R. Laugesen and S. Liu~\cite{LL16}, S. Ariturk and R. Laugesen~\cite{AL}, etc. We establish, in Section \ref{2term-section}, bounds for lattice counting to control the stretching factor $A$. Then we prove, in Section \ref{mainsection}, asymptotic formulas for lattice counting to study the limiting behavior of the optimal stretching factor.

One key ingredient in the proof is the following bound (given by \cite[Theorem 2.1]{guo1}) of the Fourier transform of the characteristic
function of $\mathcal{D}$. Let $0<c\leq 1$ be a constant. For any $\xi\in S^{d-1}$ with
$|\xi_d|\geq c$ and $\lambda>0$ we have
\begin{equation}
|\widehat{\chi}_{\mathcal{D}}(\lambda \xi)|\lesssim
\lambda^{-1}\prod_{l=1}^{d-1}
\min\left(\lambda^{-\frac{1}{\omega_l}},
|\xi_l|^{-\frac{\omega_l-2}{2(\omega_l-1)}}\lambda^{-\frac{1}{2}}
\right), \label{FT-remark}
\end{equation}
(\footnote{For functions $f$ and $g$ with $g$ taking nonnegative real values,
$f\lesssim g$ means $|f|\leq Cg$ for some constant $C$. The Landau
notation $f=O(g)$ is equivalent to $f\lesssim g$. })where the implicit constant only depends on $c$ and $\mathcal{D}$.

At last we remark that interesting results on the optimal stretching problem in $\mathbb{R}^2$ with \emph{shifted} lattices can be found in R. Laugesen and S. Liu~\cite{LL17}. It would be interesting to investigate similar problems in high dimensions.

\section{Two-term bounds for lattice counting}\label{2term-section}

In this section we prove high-dimensional analogues of \cite[Proposition 6 and 9]{LL16} for strictly convex(\footnote{A domain $\Omega\subset \mathbb{R}^d$ is said to be strictly convex if the line segment connecting any two points $x$ and $y$ in $\Omega$ lies in the interior of $\Omega$, except possibly for its endpoints.}) domains in $\mathbb{R}^d$. In the proof we use the parallel section function of $\Omega$ (see A. Koldobsky~\cite[Chapter 2]{koldobsky05book}) to transfer the problem into a two-dimensional one (like N. Marshall did in \cite{marshall}) and then combine the methods used in \cite{LL16} and \cite{AL} for planar curves that are concave up or down.

We notice that instead of  proving two-term bounds N. Marshall~\cite{marshall} adopted a proof by contradiction using certain lower Riemann sums to prove his main theorem for convex domains with nonvanishing Gaussian curvature. By using Proposition \ref{2term-prop} below it is not hard to recover his main theorem. One can also possibly improve the rate of convergence by using the method in \cite{guo12}.

We recall that for a positive definite diagonal matrix $A$ with determinant $1$, the number $a_*$ is defined as in (\ref{aStar}).


\begin{proposition}\label{2term-prop}
Let $\Omega\subset \mathbb{R}^d$ be strictly convex, compact, and symmetric with respect to each coordinate hyperplane with $C^{2}$ boundary. There exists a positive constant $c$ depending only on $\Omega$ such that if $t/a_*\geq 1$ then
\begin{equation}
\#(\mathbb{N}^d\cap tA\Omega)\leq 2^{-d}|\Omega|t^d-ca_*t^{d-1}\label{ccc}
\end{equation}
and
\begin{equation}
\#(\mathbb{Z}^d_{+}\cap tA\Omega)\geq 2^{-d}|\Omega|t^d+ca_*t^{d-1}.\label{ccclowwer}
\end{equation}
\end{proposition}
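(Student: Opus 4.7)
The plan is to adapt the cube-packing argument of \cite[Propositions 6 and 9]{LL16} to $d\ge 3$ by slicing along the most-compressed axis of $tA\Omega$ and reducing to a one-dimensional Riemann-sum-versus-integral comparison for the parallel section function of $\Omega$, exactly as the introductory remarks to this section suggest. The two bounds \eqref{ccc} and \eqref{ccclowwer} follow from the same machinery with cubes attached to each lattice point in opposite directions; I sketch only \eqref{ccc}.

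First, to each $n\in\mathbb{N}^d\cap tA\Omega$ attach the half-open unit cube $C_n:=\prod_i(n_i-1,n_i]\subset(0,\infty)^d$. The central symmetry of $\Omega$ in every coordinate implies that all $2^d$ sign-flipped vertices $(\varepsilon_1n_1,\ldots,\varepsilon_dn_d)$ lie in $tA\Omega$, so by convexity the box $\prod_i[-n_i,n_i]$, and hence $C_n$, is contained in $tA\Omega$. Since the cubes are disjoint,
\[\#(\mathbb{N}^d\cap tA\Omega)=\Bigl|\bigcup_n C_n\Bigr|\le|tA\Omega\cap(0,\infty)^d|=2^{-d}|\Omega|t^d,\]
which is the trivial first-term upper bound. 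Applied in dimension $d-1$ the same argument gives a slicewise bound $\#(\mathbb{N}^{d-1}\cap S)\le 2^{-(d-1)}|S|$ for any $(d-1)$-dimensional centrally symmetric convex slice $S$, used in the next step.

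Next, relabel coordinates so that $a_d=\min_ia_i=1/a_*$, and let $\phi(s):=|\{y\in\mathbb{R}^{d-1}:(y,s)\in\Omega\}|$ be the parallel section function of $\Omega$ along $x_d$. By symmetry and strict convexity, $\phi$ is even, continuous, $C^2$ on the interior of its support, strictly decreasing on $[0,x_0]$ with $x_0:=\max\{s:(0,\ldots,0,s)\in\Omega\}>0$, vanishes at $s=x_0$, and $\int_0^{x_0}\phi=|\Omega|/2$. Set $h:=1/(ta_d)=a_*/t\le 1$ and $K:=\lfloor ta_dx_0\rfloor$; since the slice of $tA\Omega$ at $x_d=k$ has $(d-1)$-volume $(t^{d-1}/a_d)\phi(kh)$, summing the slicewise bound from Step 1 yields
\[\#(\mathbb{N}^d\cap tA\Omega)\le\frac{t^{d-1}}{2^{d-1}a_d}\sum_{k=1}^K\phi(kh).\]
The crux is a quantitative underestimate of this right Riemann sum: by strict convexity and $C^2$ regularity of $\partial\Omega$ there is a closed sub-interval $[\alpha,\beta]\subset(0,x_0)$, depending only on $\Omega$, on which $-\phi'(s)\ge\mu>0$ uniformly. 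For each $k$ with $[(k-1)h,kh]\subset[\alpha,\beta]$, the mean value theorem gives $\int_{(k-1)h}^{kh}(\phi(s)-\phi(kh))\,ds\ge\mu h^2/2$; summing over the $\gtrsim(\beta-\alpha)/h$ such intervals produces $h^{-1}\int_0^{x_0}\phi-\sum_{k=1}^K\phi(kh)\ge c_1(\Omega)>0$, uniformly once $t/a_*$ exceeds a constant depending only on $\Omega$. Feeding this back yields
\[\#(\mathbb{N}^d\cap tA\Omega)\le 2^{-d}|\Omega|t^d-\frac{c_1}{2^{d-1}}a_*t^{d-1},\]
which is \eqref{ccc}. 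The companion bound \eqref{ccclowwer} follows by attaching cubes $C_n^+:=\prod_i[n_i,n_i+1]$ outward from each $n\in\mathbb{Z}_+^d\cap tA\Omega$: the centrally-symmetric-box argument shows these cover $tA\Omega\cap[0,\infty)^d$, and the symmetric left-Riemann-sum overestimate of $\sum_{k=0}^K\phi(kh)$ provides the matching $+ca_*t^{d-1}$ excess.

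The hard part will be producing the sub-interval $[\alpha,\beta]$ and the lower bound $\mu$ depending only on $\Omega$ (and not on $A$ or $t$). Near $s=0$ the symmetry of $\Omega$ forces $\phi'(0)=0$, and near $s=x_0$ the behavior $\phi'(s)\sim -(x_0-s)^{(d-3)/2}$ can vanish for $d\ge 4$; however, strict convexity plus $C^2$ regularity force $\phi$ to be strictly decreasing at every interior point, and a compactness argument then produces the required uniform lower bound on any closed sub-interval bounded away from $0$ and $x_0$. The residual transition regime $1\le t/a_*\le C(\Omega)$ is handled separately by a direct boundary-cap count: there $K$ is bounded, the few nonempty slices are compared to their own volume, and strict convexity of $\Omega$ near its intersection with the positive $x_d$-axis provides a definite volumetric deficit of order $a_*t^{d-1}$ sufficient for \eqref{ccc}.
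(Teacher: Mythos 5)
Your overall architecture is the same as the paper's: slice along the most compressed axis, reduce \eqref{ccc} to showing that the right Riemann sum $\sum_k \phi(kh)$ falls short of $h^{-1}\int_0^{x_0}\phi$ by a constant depending only on $\Omega$, and treat the bounded range of $t/a_*$ separately. (Your first step, cube packing, and the paper's step of dominating the inner $(d-1)$-fold sum by an integral using monotonicity are interchangeable here and give the same slicewise bound $2^{-(d-1)}|S_k|$.) The one place where you diverge is precisely the crux, and there your argument has a genuine gap: you need a closed subinterval $[\alpha,\beta]\subset(0,x_0)$ on which $-\phi'\ge\mu>0$ uniformly, and you propose to get it from ``$\phi$ is strictly decreasing at every interior point, plus compactness.'' That implication is false: a $C^1$ strictly decreasing function can have $\phi'=0$ on a dense set of points of any subinterval, so strict monotonicity gives neither pointwise negativity of $\phi'$ nor, a fortiori, a uniform bound. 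Moreover the two regimes you correctly identify as dangerous ($\phi'(0)=0$ by symmetry, and $\phi'\to 0$ at $x_0$ when $d\ge4$) show the bound cannot be had for free. The fact you need is true, but it requires a real argument --- e.g.\ strict concavity of $\phi^{1/(d-1)}$ on the open support (Brunn--Minkowski together with the equality case, using strict convexity of $\Omega$), which forces the one-sided derivatives of $\phi^{1/(d-1)}$, hence of $\phi$, to be bounded above by a negative constant on any $[\alpha,\beta]$ with $\alpha>\beta/2$, say. As written, your proof is missing its key lemma.

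It is worth noting how the paper sidesteps this entirely: it only uses the $C^2$ regularity of the parallel section function near $0$ (Koldobsky) to locate an interval on which $f''$ has constant sign, and then bounds the area between the curve and the step function from below by a telescoping sum of triangle areas, which collapses to $\tfrac12\bigl(f(p-\tfrac{\delta}{2})-f(p+\tfrac{\delta}{2})\bigr)>0$; this needs only \emph{strict monotonicity} of $f$, never a lower bound on $|f'|$. If you want to keep your MVT computation you must supply the Brunn--Minkowski argument above; otherwise you should switch to the telescoping device. Separately, your treatment of the transition regime $1\le t/a_*\le C(\Omega)$ is also too vague: the slab $\{Kh<x_d\le ta_dx_0\}$ you implicitly rely on can have arbitrarily small width, so the ``definite volumetric deficit'' does not come from a boundary cap alone; the clean fix is the paper's observation that the left-hand side of \eqref{aaa} is a continuous, strictly positive function of $ta_d$ on a compact interval, hence bounded below.
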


\begin{proof}
Without loss of generality we assume $a_*=a_1^{-1}$. We will prove \eqref{ccc} below while
the proof of \eqref{ccclowwer} is essentially the same. We have that
\begin{equation*}
\#(\mathbb{N}^d\cap tA\Omega)=\sum_{k_1\in \mathbb{N}} \sum_{k'\in\mathbb{N}^{d-1}}\chi_{\Omega}(\frac{k_1}{a_1 t}, \ldots, \frac{k_d}{a_d t})
\end{equation*}
and, by the convexity of $\Omega$, that
\begin{equation*}
\sum_{k'\in\mathbb{N}^{d-1}}\chi_{\Omega}(\frac{k_1}{a_1 t}, \ldots, \frac{k_d}{a_d t})\le \int_{\mathbb{R}_{+}^{d-1}}\chi_{\Omega}(\frac{k_1}{a_1 t}, \frac{x_2}{a_2 t}, \ldots, \frac{x_d}{a_d t}) \,\textrm{d}x',
\end{equation*}
where $x'=(x_2, \ldots, x_d)$. Then, by change of variables and $\det(A)=1$, we get that
\begin{equation}
\#(\mathbb{N}^d\cap tA\Omega)\leq a_1^{-1}t^{d-1} \sum_{k_1\in \mathbb{N}} f(\frac{k_1}{a_1 t}),\label{bbb}
\end{equation}
where
\begin{equation}
f(x_1)=\int_{\mathbb{R}_{+}^{d-1}}\chi_{\Omega}(x_1, x') \,\textrm{d}x'\label{fff}
\end{equation}
is (up to a constant multiple) the parallel section function of $\Omega$ in the direction of $x_1$-axis, which is continuous from $\mathbb{R}_{+}$ to $\mathbb{R}_{+}$, strictly decreasing and supported on $[0, X_1]$ for some constant $X_1>0$, and bounded above by an absolute constant $f(0)$.

We claim that there exists a positive constant $c$ depending only on $\Omega$ such that
\begin{equation}
\int_0^{\infty} f(\frac{x_1}{a_1 t})\,\textrm{d}x_1-\sum_{k_1\in \mathbb{N}} f(\frac{k_1}{a_1 t})\geq c. \label{aaa}
\end{equation}
Since the integral in \eqref{aaa} is equal to $2^{-d} |\Omega| a_1 t$, the desired estimate \eqref{ccc} follows easily from \eqref{bbb} and \eqref{aaa}.

If we denote
\begin{equation*}
F(x_1)=f(x_1/a_1t),
\end{equation*}
to prove the claim it suffices to show that the area between the curves of $F$ and a step function $\sum_{j\in\mathbb{N}}F(j)\chi_{[j-1, j)}(x_1)$ has a lower bound $c$ (depending only on $\Omega$), by comparing it with the total area of certain ``inbetween'' triangles.

Notice that there must exist an interval $[p-2\delta, p+2\delta]\subset (0, X_1)$ with $\delta>0$, on which $f''$ is always nonnegative or nonpositive. Indeed, since $f$ is two times continuously differentiable in some neighborhood of zero (see \cite[Lemma 2.4]{koldobsky05book}), if there exists a point $p$ in that neighborhood such that $f''(p)\ne 0$ then the continuity of $f''$ ensures the existence of such an interval.

We first assume that $a_1 t\geq 2/\delta$ and study $F$  mainly on the interval $[(p-\delta)a_1 t, (p+\delta) a_1 t]$.

If $F$ is always nonpositive hence concave down we study the ``inbetween'' right triangles of width $1$ with vertices $(i, F(i+1))$, $(i+1, F(i+1))$, and $(i, F(i))$, with $\lfloor(p-\delta)a_1 t\rfloor+1\leq i\leq \lfloor(p+\delta)a_1 t\rfloor-1$. The total area of these triangles is equal to
\begin{align}
\sum_{i=\lfloor(p-\delta)a_1 t\rfloor+1}^{\lfloor(p+\delta)a_1 t\rfloor-1}&\frac{1}{2}\left(F(i)-F(i+1)\right)\nonumber \\
=&\frac{1}{2}\left(F(\lfloor(p-\delta)a_1 t\rfloor+1)-F(\lfloor(p+\delta)a_1 t\rfloor)\right)\nonumber\\
\geq &\frac{1}{2}\left(f\left(p-\frac{\delta}{2}\right)-f\left(p+\frac{\delta}{2}\right)\right),\label{const1}
\end{align}
where in the last inequality we have used $a_1t\geq 2/\delta$.

If $F$ is always nonnegative hence concave up we study the ``inbetween'' right triangles of width $1$ with vertices $(i, F(i+1))$, $(i+1, F(i+1))$, and $(i, F(i+1)-F'(i+1))$, where $\lfloor(p-\delta)a_1 t\rfloor\leq i\leq \lfloor(p+\delta)a_1 t\rfloor-2$. The total area of these triangles is equal to
\begin{align}
\sum_{i=\lfloor(p-\delta)a_1 t\rfloor}^{\lfloor(p+\delta)a_1 t\rfloor-2}\frac{1}{2}|F'(i+1)|&\geq \sum_{i=\lfloor(p-\delta)a_1 t\rfloor}^{\lfloor(p+\delta)a_1 t\rfloor-2}\frac{1}{2}|F(i+1)-F(i+2)|\nonumber\\
&=\frac{1}{2}\left(F(\lfloor(p-\delta)a_1 t\rfloor+1)-F(\lfloor(p+\delta)a_1 t\rfloor)\right)\nonumber,
\end{align}
which has the same lower bound as in \eqref{const1}.

If $1\leq a_1 t\leq 2/\delta$, we observe that the left hand side of \eqref{aaa} is continuous in $a_1t$ and always positive (since $f$ is strictly decreasing), therefore it must have a uniform lower bound $c_1>0$.

By choosing $c$ sufficiently small (say, smaller than $c_1$ and the absolute constants in \eqref{const1}) , we get \eqref{aaa}. This finishes the proof of \eqref{ccc}.
\end{proof}


\section{Asymptotics for lattice counting}\label{mainsection}

In this section we prove several asymptotic formulas for lattice counting. See N. Marshall~\cite{marshall} for the case when the domain's boundary has nonvanishing Gaussian curvature.

We first prove an asymptotic formula for the number of lattice points in the domain $tA\mathcal{D}$.
Its proof is very standard except that we need to take care of the effect of the transformation $A$.

\begin{proposition}\label{latticeInWhole}
Let $\mathcal{D}$  be the domain  defined by \eqref{domainD}. For any positive definite diagonal matrix $A=\diag(a_1,\ldots,a_d)$ of determinant $1$, if $t/a_*\geq 1$ then
\begin{equation}
\#(\mathbb{Z}^d\cap tA\mathcal{D})=|\mathcal{D}|t^d+O\left(a_*^{1+\frac{d-1}{\omega}} t^{(d-1)(1-\frac{1}{\omega})}+a_*^{2-\frac{2}{d+1}}t^{d-2+\frac{2}{d+1}}\right),\label{ggg}
\end{equation}
where the implicit constant depends only on the domain $\mathcal{D}$.
\end{proposition}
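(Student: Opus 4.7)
The plan is to apply Poisson summation to a mollified $\chi_\mathcal{D}$ evaluated on the rescaled lattice $L=(tA)^{-1}\mathbb{Z}^d$ (covolume $t^{-d}$, dual lattice $L^{*}=tA\mathbb{Z}^d$), and then to bound the resulting oscillatory sum using the Fourier-transform estimate \eqref{FT-remark}.

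First I would fix a nonnegative $\rho\in C_c^\infty(B(0,1))$ with $\int\rho=1$, set $\rho_\epsilon(x)=\epsilon^{-d}\rho(x/\epsilon)$, and, for $\epsilon$ sufficiently small in terms of the inner radius of $\mathcal{D}$, construct sandwiching functions
\[
\psi_\epsilon^{\pm}:=\chi_{(1\pm c_\mathcal{D}\epsilon)\mathcal{D}}\ast\rho_\epsilon,\qquad \psi_\epsilon^{-}\le\chi_\mathcal{D}\le\psi_\epsilon^{+},
\]
with $c_\mathcal{D}>0$ depending only on the convex body $\mathcal{D}$. Applying Poisson summation on $L$ gives
\[
\sum_{x\in L}\psi_\epsilon^{\pm}(x)=t^d\sum_{n\in\mathbb Z^d}(1\pm c_\mathcal{D}\epsilon)^d\,\widehat{\chi_\mathcal{D}}\bigl((1\pm c_\mathcal{D}\epsilon)tAn\bigr)\,\widehat\rho(\epsilon tAn);
\]
the $n=0$ term produces the main term $|\mathcal{D}|t^d$ plus a smoothing contribution of size $O(\epsilon t^d)$.

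The substantive work is bounding the oscillatory sum
\[
E\;=\;t^d\sum_{n\ne 0}|\widehat{\chi_\mathcal{D}}(tAn)|\,|\widehat\rho(\epsilon tAn)|,
\]
in which the decay of $\widehat\rho$ effectively restricts to $|tAn|\lesssim 1/\epsilon$. I would partition $\mathbb{Z}^d\setminus\{0\}$ into $d$ regions according to which coordinate of $tAn$ is the largest in absolute value; the reflection symmetry of $\mathcal{D}$ permits the bound \eqref{FT-remark} to be applied (after relabeling) in each region. Inside each region the min in \eqref{FT-remark} selects one of two branches, producing two regimes. In the \emph{generic} regime (where every non-dominant direction cosine $|\xi_l|$ exceeds the threshold $|tAn|^{-(\omega_l-1)/\omega_l}$) one has the Hlawka--Herz bound $|\widehat{\chi_\mathcal{D}}(tAn)|\lesssim|tAn|^{-(d+1)/2}$; combined with the anisotropic count $\#\{n\in\mathbb{Z}^d:|tAn|\le R\}\lesssim(Ra_*/t)^d$ valid for $R\ge t/a_*$, a dyadic decomposition over $R\in[t/a_*,1/\epsilon]$ yields $E_{\mathrm{gen}}\lesssim a_*^d\,\epsilon^{-(d-1)/2}$. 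In the \emph{axial} regime (near a coordinate axis, where some $|\xi_l|$ falls below its threshold and the $|tAn|^{-1/\omega_l}$ branch dominates) the worst-case bound is $|\widehat{\chi_\mathcal{D}}(tAn)|\lesssim|tAn|^{-1-(d-1)/\omega}$; summing the convergent series $\sum_{n_j\ge 1}(ta_jn_j)^{-1-(d-1)/\omega}$ over the $d$ axes, together with $a_j\ge 1/a_*$, gives $E_{\mathrm{ax}}\lesssim a_*^{1+(d-1)/\omega}\,t^{(d-1)(1-1/\omega)}$.

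Finally, balancing the smoothing error $O(\epsilon t^d)$ against $E_{\mathrm{gen}}$ by the choice $\epsilon=(a_*/t)^{2d/(d+1)}$ (which lies in $(0,a_*/t]$ since $t/a_*\ge 1$ and $2d/(d+1)\ge 1$) produces the contribution $O(a_*^{2-2/(d+1)}t^{d-2+2/(d+1)})$; adding $E_{\mathrm{ax}}$ then yields the announced two-term error. I expect the axial regime to be the principal technical obstacle: each non-dominant coordinate $|\xi_l|$ crosses its threshold at its own scale, so within each dyadic shell of $|tAn|$ one must book-keep a product of subcases. The symmetry of $\mathcal{D}$ under coordinate reflections and the separability of the product in \eqref{FT-remark} should keep that analysis modular, coordinate-by-coordinate.
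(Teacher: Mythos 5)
Your strategy is the paper's: mollified Poisson summation against the dual lattice $tA\mathbb{Z}^d$, the bound \eqref{FT-remark}, a decomposition of the dual sum according to proximity to the coordinate axes, and the same optimized mollification scale (your $\epsilon=(a_*/t)^{2d/(d+1)}$ is exactly the paper's $a_*\varepsilon/t$ with $\varepsilon=(t/a_*)^{-(d-1)/(d+1)}$); your sandwiching by $\chi_{(1\pm c_{\mathcal D}\epsilon)\mathcal D}\ast\rho_\epsilon$ simply replaces the appeal to M\"uller's lemma, and your two extreme computations do reproduce the two error terms. One assertion needs repair, though: the Hlawka--Herz bound $|\widehat{\chi}_{\mathcal D}(tAn)|\lesssim|tAn|^{-(d+1)/2}$ is \emph{not} available throughout your generic regime, because $\partial\mathcal D$ has flat points; above the threshold, \eqref{FT-remark} only gives the factor $|\xi_l|^{-(\omega_l-2)/(2(\omega_l-1))}\,|tAn|^{-1/2}$, which blows up as $|\xi_l|\to 0$. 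This does not sink the argument --- the exponent $(\omega_l-2)/(2(\omega_l-1))<1$ makes these factors summable coordinate-by-coordinate over each dyadic shell, and the shell sum still lands at $\lesssim a_*^{d}\epsilon^{-(d-1)/2}$ --- but the pointwise claim as stated is false and the summation must be carried out with the anisotropic factors in place. Relatedly, the lattice points that are neither on an axis nor have all non-dominant cosines above threshold (some coordinates zero or sub-threshold, others not) fall outside both of your regimes; these are precisely the subset-indexed terms $S$ with $2\le|S|\le d-1$ in the paper's bound \eqref{sid}, whose exponents of $a_*$, $t$, $\epsilon$ depend linearly on $|S|$ and $\alpha(S)=\sum_{l\notin S}1/\omega_l$, so the maximum over all subcases is attained at the two corners you computed. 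Writing out that linear-interpolation check is the ``direct computation'' the paper alludes to after \eqref{jjj}, and it is the piece your proposal defers; with it supplied, the argument closes.
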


\begin{proof}
Let $0\leq \rho\in C_0^{\infty}(\mathbb{R}^d)$ be a bump function such that
$\supp \rho\subset B(0, 1)$, $\int_{\mathbb{R}^d}\rho(x) \, \textrm{d}x=1$, $\rho_{\varepsilon}(x)= \varepsilon^{-d}\rho(\varepsilon^{-1}x)$, $0<\varepsilon<1$,
and
\begin{equation*}
N_{A,\varepsilon}(t)=\sum_{k\in\mathbb{Z}^d}\chi_{tA
\mathcal{D}}\ast\rho_{\varepsilon}(k),
\end{equation*}
where $\chi_{tA\mathcal{D}}$ denotes the characteristic function of $tA\mathcal{D}$. By the Poisson summation formula, we have
\begin{equation}
N_{A,\varepsilon}(t)=t^d\sum_{k\in\mathbb{Z}^d}\widehat{\chi}_{\mathcal{D}}(tAk)\widehat{\rho}(\varepsilon k)
=|\mathcal{D}|t^d+R_{A,\varepsilon}(t),\label{hhh}
\end{equation}
where
\begin{equation*}
R_{A,\varepsilon}(t)=t^d\sum_{k\in\mathbb{Z}_\ast^d}\widehat{\chi}_{\mathcal{D}}(tAk)\widehat{\rho}(\varepsilon k).
\end{equation*}
By adding a partition of unity $\sum_{j=1}^{d}\Omega_j\equiv1$ with each $\Omega_j$ supported in $\Gamma_j=\{x\in\mathbb{R}^d:|x_j|\ge (2d)^{1/2}|x|\}$ and smooth away from the origin, we have
\begin{equation*}
R_{A,\varepsilon}(t)=\sum_{1\le i,j\le d}S_{ij}
\end{equation*}
with
\begin{equation*}
S_{ij}=t^d\sum_{(i)}\Omega_j(Ak)\widehat{\chi}_{\mathcal{D}}(tAk)\widehat{\rho}(\varepsilon k),
\end{equation*}
where the summation is over all lattice points $k$'s with exactly $1\leq i\leq d$ nonzero components such that $Ak\in \supp (\Omega_j)$.

We may assume $j=d$ below since other cases are similar due to symmetry.

If $i=1$, by \cite[Theorem 2.1]{guo1} (see \eqref{FT-remark}) we have
\begin{equation}
|S_{1d}|\lesssim a_d^{-1-\sum_{l=1}^{d-1}\frac{1}{\omega_l}}t^{d-1-\sum_{l=1}^{d-1}\frac{1}{\omega_l}}\leq a_*^{1+\sum_{l=1}^{d-1}\frac{1}{\omega_l}}t^{d-1-\sum_{l=1}^{d-1}\frac{1}{\omega_l}},\label{s1d}
\end{equation}
where in the last inequality we have used the definition of $a_*$.

If $2\le i\le d$, by using \cite[Theorem 2.1]{guo1}, $|a_l|\geq a_*^{-1}$, and $|Ak|\geq a_*^{-1}|k|$, and comparing the sums with integrals in polar coordinates, we get
\begin{equation}
|S_{id}|\lesssim \sum_{S\in \mathcal{P}_i(\mathbb{N}_d) : d\in S} a_*^{\frac{i+1}{2}+\alpha(S)}t^{d-\frac{i+1}{2}-\alpha(S)}\left(1+\varepsilon^{-\frac{i-1}{2}+\alpha(S)}\right),\label{sid}
\end{equation}
where $\mathbb{N}_d=\{1, 2, \ldots, d\}$,  $\mathcal{P}_i(\mathbb{N}_d)$ is the collection of all subsets of
$\mathbb{N}_d$ having $i$ elements, and
\begin{equation*}
\alpha(S):=\sum\limits_{1\leq l\leq d, \, l\notin S}\frac{1}{\omega_l}.
\end{equation*}
Note that the contribution coming from the term $1$ on the right side of \eqref{sid} is not larger than
the right side of \eqref{s1d} if $a_*<t$.

By \eqref{s1d}, \eqref{sid}, and similar bounds for other $j$'s we obtain that
\begin{align}
\begin{split}
|R_{A,\varepsilon}(t)|&\lesssim \sum_{S\in \mathcal{P}_1(\mathbb{N}_d)} a_*^{1+\alpha(S)} t^{d-1-\alpha(S)}\\
        &\quad +\sum_{S\in P_i(\mathbb{N}_d),2\le i\le d}a_*^{\frac{i+1}{2}+\alpha(S)}t^{d-\frac{i+1}{2}-\alpha(S)}\varepsilon^{-\frac{i-1}{2}+\alpha(S)}
\end{split}\label{jjj}
\end{align}
whenever $a_*<t$.

By M\"uller \cite[Lemma 3]{mullerI} we have
\begin{equation}
N_{A,\varepsilon}(t-a_*\varepsilon)\le   \#(\mathbb{Z}^d\cap tA\mathcal{D})  \le N_{A,\varepsilon}(t+a_*\varepsilon).\label{kkk}
\end{equation}
Let
\begin{equation*}
\varepsilon=\left(t/a_*\right)^{-\frac{d-1}{d+1}}.
\end{equation*}
Note that $t/2\leq t\pm a_*\varepsilon \leq 3t/2$ if $t/a_*\geq C$ for a sufficiently large constant $C$. Combining \eqref{hhh}, \eqref{jjj}, and \eqref{kkk} yields
\begin{align*}
\left|\#(\mathbb{Z}^d\cap tA\mathcal{D})-|\mathcal{D}|t^d\right|&\lesssim a_*t^{d-1}\varepsilon+|R_{A,\varepsilon}(t\pm a_*\varepsilon)|\\
&\lesssim a_*^{1+\frac{d-1}{\omega}} t^{(d-1)(1-\frac{1}{\omega})}+a_*^{2-\frac{2}{d+1}}t^{d-2+\frac{2}{d+1}},
\end{align*}
where the last inequality can be verified by a direct computation. Hence we get \eqref{ggg} when $t/a_*\geq C$.

If $1\leq t/a_*\leq C$ applying the previous argument to $\#(\mathbb{Z}^d\cap (Ct)A\mathcal{D})$ yields that
\begin{equation*}
 \#(\mathbb{Z}^d\cap (Ct)A\mathcal{D})\lesssim t^d.
\end{equation*}
Hence
\begin{equation*}
\#(\mathbb{Z}^d\cap tA\mathcal{D})\lesssim t^d,
\end{equation*}
which trivially leads to the desired asymptotic formula \eqref{ggg}.
\end{proof}


Recall that  $\mathcal{D}_j\subset \mathbb{R}^d$ is the intersection of $\mathcal{D}$ (defined by \eqref{domainD}) with the coordinate hyperplane defined by $x_j=0$. As a consequence of Proposition \ref{latticeInWhole} we get the number of lattice points in the dilated and stretched intersections.

\begin{proposition}\label{latticege0}
Let $A=\diag(a_1,\ldots,a_d)$ be a positive definite diagonal matrix of determinant $1$, if $t/a_*\geq 1$ then
\begin{equation}
\begin{split}
&\#(\mathbb{Z}^d\cap tA(\cup_{j=1}^d\mathcal{D}_j))-\sum_{j=1}^da_j^{-1}|\mathcal{D}_j|t^{d-1}\\
&\quad=O \left(  a_*^{1+\frac{d-1}{\omega}} t^{(d-1)(1-\frac{1}{\omega})}+a_*^{2-\frac{2}{d+1}}t^{d-2+\frac{2}{d+1}}\right),
\end{split}\label{unionforhyper}
\end{equation}
where the implicit constant depends only on the domain $\mathcal{D}$.
\end{proposition}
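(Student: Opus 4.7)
The plan is to reduce each summand $\#(\mathbb{Z}^d\cap tA\mathcal{D}_j)$ to a $(d-1)$-dimensional lattice count, invoke Proposition~\ref{latticeInWhole} one dimension lower, and then assemble the result by inclusion--exclusion. Since $\mathcal{D}_j$ lies in the hyperplane $\{x_j=0\}$, any integer point in $tA\mathcal{D}_j$ must have $k_j=0$, giving
$$\#(\mathbb{Z}^d\cap tA\mathcal{D}_j)=\#(\mathbb{Z}^{d-1}\cap t\tilde A_j\tilde{\mathcal{D}}_j),$$
where $\tilde A_j=\diag(a_l:l\ne j)$ and $\tilde{\mathcal{D}}_j\subset\mathbb{R}^{d-1}$ is the $(d-1)$-dimensional finite-type domain corresponding to $\mathcal{D}_j$. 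Since $\det\tilde A_j=a_j^{-1}\ne 1$, I would normalize by writing $\tilde A_j=a_j^{-1/(d-1)}B_j$ with $\det B_j=1$ and setting $\tilde t=ta_j^{-1/(d-1)}$, so that $t\tilde A_j=\tilde tB_j$.

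Applying the $(d-1)$-dimensional analogue of Proposition~\ref{latticeInWhole} (whose proof carries over verbatim, since the Fourier bound \eqref{FT-remark} holds in every dimension) then yields
$$\#(\mathbb{Z}^{d-1}\cap\tilde tB_j\tilde{\mathcal{D}}_j)=|\mathcal{D}_j|\tilde t^{d-1}+O\bigl(\tilde a_*^{1+(d-2)/\omega}\tilde t^{(d-2)(1-1/\omega)}+\tilde a_*^{2-2/d}\tilde t^{d-3+2/d}\bigr),$$
with $\tilde a_*=\|B_j^{-1}\|_\infty\le a_j^{-1/(d-1)}a_*$. The leading term immediately becomes $a_j^{-1}|\mathcal{D}_j|t^{d-1}$. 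For the remainder, I note that in each term the exponents $\alpha$ of $\tilde a_*$ and $\beta$ of $\tilde t$ satisfy $\alpha+\beta=d-1$, so that the powers of $a_j^{-1/(d-1)}$ telescope into a single factor $a_j^{-1}\le a_*$, producing the bound $a_*^{\alpha+1}t^{\beta}$. A short arithmetic check using $a_*\le t$ then shows that both $a_*^{2+(d-2)/\omega}t^{(d-2)(1-1/\omega)}$ and $a_*^{3-2/d}t^{d-3+2/d}$ are dominated by the pair of error terms appearing in \eqref{unionforhyper} (the relevant ratios come out to $(a_*/t)^{1-1/\omega}$ and $(a_*/t)^{1-2/(d(d+1))}$, both $\le 1$).

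Finally, to pass from $\sum_j\#(\mathbb{Z}^d\cap tA\mathcal{D}_j)$ to the union, I apply inclusion--exclusion:
$$\#\bigl(\mathbb{Z}^d\cap tA(\cup_j\mathcal{D}_j)\bigr)=\sum_{\emptyset\ne S\subset\{1,\ldots,d\}}(-1)^{|S|+1}\#\bigl(\mathbb{Z}^d\cap tA(\cap_{j\in S}\mathcal{D}_j)\bigr).$$
For $|S|\ge 2$ the intersection $\cap_{j\in S}\mathcal{D}_j$ sits in a codimension-$|S|$ coordinate subspace, so the corresponding count reduces to lattice points in a stretched finite-type domain in $\mathbb{R}^{d-|S|}$ and is controlled, by a trivial volume comparison, by $(\prod_{j\in S}a_j)^{-1}t^{d-|S|}\le a_*^{|S|}t^{d-|S|}$. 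Since $|S|\ge 2$ and $a_*\le t$, such contributions are bounded by $a_*^{2-2/(d+1)}t^{d-2+2/(d+1)}$ and hence absorbed into the right-hand side of \eqref{unionforhyper}. The main obstacle is really just the careful bookkeeping of exponents in the rescaling step; once the cancellation $a_j^{-(\alpha+\beta)/(d-1)}=a_j^{-1}$ is recognized, everything else is elementary.
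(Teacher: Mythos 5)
Your proof is correct and follows essentially the same route as the paper: rescale $tA_j\mathcal{D}_j$ to a determinant-one stretch of a $(d-1)$-dimensional finite-type domain, apply the lower-dimensional analogue of Proposition~\ref{latticeInWhole}, and use the identity $\alpha+\beta=d-1$ in the error exponents to convert $a_j^{-(\alpha+\beta)/(d-1)}$ into the factor $a_j^{-1}\le a_*$. The only (immaterial) difference is in handling the overlaps: the paper uses the two-sided Bonferroni inequalities together with a sharper asymptotic for the pairwise intersections $\mathcal{D}_{j,k}$, whereas you use full inclusion--exclusion with a trivial volume bound $a_*^{|S|}t^{d-|S|}$ for all $|S|\ge 2$, which suffices equally well here.
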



\begin{proof}
For $1\leq j\leq d$ we let $A_j=\diag(a_1, \ldots,a_{j-1},a_{j+1},\ldots, a_d)$ be a $(d-1)\times (d-1)$ matrix and use the notation $\mathcal{D}_j$ to represent the set
\begin{equation*}
\{(x_1,\ldots,x_{j-1},x_{j+1},\ldots,x_d)\in \mathbb{R}^{d-1} : \sum_{1\leq l\leq d, l\neq j} x_l^{\omega_l}\leq 1\}
\end{equation*}
as well. The precise meaning of $\mathcal{D}_j$ will be clear from the context and this abuse of notation will not cause any problem. Note that
\begin{equation*}
\#(\mathbb{Z}^{d-1}\cap tA_j\mathcal{D}_j)=\#\left(\mathbb{Z}^{d-1}\cap \left(t a_j^{-\frac{1}{d-1}}\right)\left(a_j^{\frac{1}{d-1}}A_j\right)\mathcal{D}_j\right).
\end{equation*}
Applying Proposition \ref{latticeInWhole} to the right hand side of the equation above yields
\begin{equation}
\begin{split}
&\left|\#(\mathbb{Z}^{d-1}\cap tA_j\mathcal{D}_j)-a_j^{-1}|\mathcal{D}_j|t^{d-1}\right|\\
&\quad \lesssim a_j^{-1}\left(a_*^{1+\frac{d-2}{\omega}}t^{(d-2)(1-\frac{1}{\omega})}+a_*^{2-\frac{2}{d}}t^{d-3+\frac{2}{d}}\right).
\end{split}\label{dimd1}
\end{equation}

For $1\leq j\ne k\leq d$ let $A_{j,k}$ be the  $(d-2)\times (d-2)$ diagonal matrix  by removing the $j$-th and $k$-th columns and rows from $A$, and $\mathcal{D}_{j,k}=\mathcal{D}_j\cap\mathcal{D}_k\subset \mathbb{R}^d$. As above we abuse the notation $\mathcal{D}_{j,k}$ for a subset of $\mathbb{R}^{d-2}$. Since
\begin{equation*}
 \#(\mathbb{Z}^{d-2}\cap tA_{j,k}\mathcal{D}_{j,k})=\#\left(\mathbb{Z}^{d-2}\cap \left(t(a_ja_k)^{-\frac{1}{d-2}}\right)\left((a_ja_k)^{\frac{1}{d-2}}A_j\right)\mathcal{D}_{j,k}\right),
 \end{equation*}
applying Proposition \ref{latticeInWhole} again gives
\begin{equation}
\begin{split}
&\left|\#(\mathbb{Z}^{d-2}\cap tA_{j,k}\mathcal{D}_{j,k})-(a_ja_k)^{-1}|\mathcal{D}_{j,k}|t^{d-2}\right|\\
&\quad \lesssim (a_ja_k)^{-1}\left(a_*^{1+\frac{d-3}{\omega}}t^{(d-3)(1-\frac{1}{\omega})}+a_*^{2-\frac{2}{d-1}}t^{d-4+\frac{2}{d-1}}\right),
\end{split}\label{dimd2}
\end{equation}
where $|\mathcal{D}_{j,k}|$ denotes the $(d-2)$-dimensional measure of $\mathcal{D}_{j,k}$.

Since
\begin{align*}
&\sum_{j=1}^d\#(\mathbb{Z}^{d-1}\cap tA_j\mathcal{D}_j)-\sum_{1\leq j<k \le d}\#(\mathbb{Z}^{d-2}\cap tA_{j,k}\mathcal{D}_{j,k})\\
&\qquad\leq \#(\mathbb{Z}^d\cap tA(\cup_{j=1}^d\mathcal{D}_j))\le\sum_{j=1}^d\#(\mathbb{Z}^{d-1}\cap tA_j\mathcal{D}_j),
\end{align*}
we get \eqref{unionforhyper} by combining \eqref{dimd1}, \eqref{dimd2}, the definition of $a_*$, and $t/a_*\geq 1$.
\end{proof}

Using Proposition \ref{latticeInWhole} and \ref{latticege0}, we get the numbers of positive and nonnegative lattice points in $tA\mathcal{D}$.


\begin{theorem}\label{latticeInpositive}
Let $A=\diag(a_1,\ldots,a_d)$ be a positive definite diagonal matrix of determinant $1$, if $t/a_*\geq 1$ then
\begin{equation}\label{numpositive}
\begin{split}
\#(\mathbb{N}^d\cap tA\mathcal{D})&=2^{-d}|\mathcal{D}|t^d-2^{-d}\sum_{j=1}^da_j^{-1}|\mathcal{D}_j|t^{d-1}\\
      &\quad+O\left( a_*^{1+\frac{d-1}{\omega}} t^{(d-1)(1-\frac{1}{\omega})}+a_*^{2-\frac{2}{d+1}}t^{d-2+\frac{2}{d+1}}\right)
\end{split}
\end{equation}
and
\begin{equation}\label{numnonnega}
\begin{split}
\#(\mathbb{Z}^d_+\cap tA\mathcal{D})&=2^{-d}|\mathcal{D}|t^d+2^{-d}\sum_{j=1}^da_j^{-1}|\mathcal{D}_j|t^{d-1}\\
&\quad+O\left( a_*^{1+\frac{d-1}{\omega}} t^{(d-1)(1-\frac{1}{\omega})}+a_*^{2-\frac{2}{d+1}}t^{d-2+\frac{2}{d+1}}\right),
\end{split}
\end{equation}
where the implicit constants depend only on the domain $\mathcal{D}$.
\end{theorem}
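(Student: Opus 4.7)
The plan is to exploit the sign-flip symmetry of $\mathcal{D}$ (forced by $\omega_j\in 2\mathbb{N}$) to rewrite counts over $\mathbb{N}^d$ and $\mathbb{Z}^d_+$ as linear combinations of lattice counts over $\mathbb{Z}^d$ of $\mathcal{D}$ and its coordinate sections, and then to invoke Propositions \ref{latticeInWhole} and \ref{latticege0}.

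For \eqref{numpositive}, I partition $\mathbb{Z}^d\cap tA\mathcal{D}$ according to the set of zero coordinates. Points with no zero coordinate are equidistributed among the $2^d$ open orthants under $x_j\mapsto -x_j$, while the remaining points are exactly those in $tA(\cup_{j=1}^d\mathcal{D}_j)$. Hence
\begin{equation*}
2^d\,\#(\mathbb{N}^d\cap tA\mathcal{D}) = \#(\mathbb{Z}^d\cap tA\mathcal{D}) - \#\bigl(\mathbb{Z}^d\cap tA(\cup_{j=1}^d\mathcal{D}_j)\bigr).
\end{equation*}
Substituting the asymptotics from Propositions \ref{latticeInWhole} and \ref{latticege0} and dividing by $2^d$ gives \eqref{numpositive} at once.

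For \eqref{numnonnega}, I iterate the one-variable identity $\sum_{k\geq 0}g(k) = \tfrac{1}{2}\bigl(\sum_{k\in\mathbb{Z}}g(k) + g(0)\bigr)$ (valid for even $g$) in each coordinate of $\chi_{tA\mathcal{D}}$, obtaining
\begin{equation*}
2^d\,\#(\mathbb{Z}^d_+\cap tA\mathcal{D}) = \sum_{S\subset\{1,\ldots,d\}}\#(\mathbb{Z}^{d-|S|}\cap tA_S\mathcal{D}_S),
\end{equation*}
where $A_S$ and $\mathcal{D}_S$ are the restrictions to the coordinates not in $S$ (with the convention that $|S|=d$ contributes $1$, the origin). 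I then apply Proposition \ref{latticeInWhole} to each summand after the determinant-normalizing rescaling used in Proposition \ref{latticege0}. The $S=\emptyset$ term produces $|\mathcal{D}|t^d$, the $d$ terms with $|S|=1$ combine to $\sum_j a_j^{-1}|\mathcal{D}_j|t^{d-1}$ (since $\prod_{k\neq j}a_k = a_j^{-1}$), and dividing by $2^d$ yields the two leading terms of \eqref{numnonnega}.

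The main obstacle is the bookkeeping of error terms, which I expect to be the only delicate point. For each $S$ with $|S|\geq 2$, the would-be main term $(\det A_S)^{-1}|\mathcal{D}_S|t^{d-|S|}$ is bounded in absolute value by $\lesssim a_*^{|S|}t^{d-|S|}$; since $t/a_*\geq 1$, this is in turn dominated by $a_*^{2-2/(d+1)}t^{d-2+2/(d+1)}$ and therefore absorbed. The remainder produced by Proposition \ref{latticeInWhole} in dimension $d-|S|$, translated back via $t\mapsto t(\det A_S)^{-1/(d-|S|)}$, yields a rescaled pair $(a_*',t')$ with $t'/a_*'\geq 1$ preserved; a direct computation parallel to \eqref{dimd1}--\eqref{dimd2} shows that each such remainder fits inside $O\bigl(a_*^{1+(d-1)/\omega}t^{(d-1)(1-1/\omega)} + a_*^{2-2/(d+1)}t^{d-2+2/(d+1)}\bigr)$. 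The same bookkeeping simultaneously confirms the error in \eqref{numpositive}.
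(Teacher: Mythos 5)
Your proof is correct and follows essentially the same route as the paper: \eqref{numpositive} comes from exactly the same symmetry identity $2^d\,\#(\mathbb{N}^d\cap tA\mathcal{D})=\#(\mathbb{Z}^d\cap tA\mathcal{D})-\#\bigl(\mathbb{Z}^d\cap tA(\cup_{j}\mathcal{D}_j)\bigr)$ followed by Propositions \ref{latticeInWhole} and \ref{latticege0}, and your exact inclusion--exclusion identity over subsets $S$ for $\mathbb{Z}_+^d$ is a self-contained version of the paper's decomposition (where the codimension-$\geq 2$ correction $R_{\mathcal D}(A,t)$ is instead bounded by citing earlier work and \eqref{dimd2}), with the same absorption of all $|S|\geq 2$ contributions via $a_*^{|S|}t^{d-|S|}\leq a_*^2t^{d-2}\leq a_*^{2-\frac{2}{d+1}}t^{d-2+\frac{2}{d+1}}$. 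The only blemish is notational: with your convention that $A_S$ retains the coordinates outside $S$, the main term for $S$ is $(\det A_S)\,|\mathcal D_S|t^{d-|S|}=\bigl(\prod_{j\in S}a_j^{-1}\bigr)|\mathcal D_S|t^{d-|S|}$, not $(\det A_S)^{-1}|\mathcal D_S|t^{d-|S|}$; the bound $\lesssim a_*^{|S|}t^{d-|S|}$ you then use is the one for the correct quantity, so nothing breaks.
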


\begin{proof}
By symmetry of the domain $\mathcal{D}$ we have that
\begin{equation*}
\#(\mathbb{N}^d\cap tA\mathcal{D})=2^{-d}\left(\#(\mathbb{Z}^d\cap tA\mathcal{D})-\#(\mathbb{Z}^d\cap tA(\cup_{j=1}^d\mathcal{D}_j))\right).
\end{equation*}
Then \eqref{numpositive} follows from \eqref{ggg} and \eqref{unionforhyper}.

For nonnegative lattice points, we know
\begin{equation*}
\#(\mathbb{Z}_+^d\cap tA\mathcal{D})=2^{-d}\left(\#(\mathbb{Z}^d\cap tA\mathcal{D})+\#(\mathbb{Z}^d\cap tA(\cup_{j=1}^d\mathcal{D}_j))+R_{\mathcal{D}}(A,t)\right),
\end{equation*}
where
\begin{equation}\label{errorinbdy}
R_{\mathcal{D}}(A,t)\lesssim\sum_{1\leq j<k \le d}\#(\mathbb{Z}^{d-2}\cap tA_{j,k}\mathcal{D}_{j,k})
\end{equation}
if $t/a_*\geq 1$ (see \cite{GL}). The implicit constant in \eqref{errorinbdy} depends only on the dimension of the domain $\mathcal{D}$. Thus \eqref{numnonnega} follows from \eqref{ggg}, \eqref{unionforhyper}, and \eqref{dimd2}.
\end{proof}


\section{Proofs of Main Theorems}
In this section we prove Theorem \ref{main-thm}.  The nonnegative case, i.e. Theorem \ref{min-thm}, can be handled essentially in the same way.

Let
\begin{equation*}
B=\diag\left(\frac{|\mathcal{D}_1|}{\sqrt[\leftroot{-3}\uproot{3}d]{|\mathcal{D}_1||\mathcal{D}_2|\cdots|\mathcal{D}_d|}},
\ldots,\frac{|\mathcal{D}_d|}{\sqrt[\leftroot{-3}\uproot{3}d]{|\mathcal{D}_1||\mathcal{D}_2|\cdots|\mathcal{D}_d|}}\right)
\end{equation*}
be a diagonal $d\times d$ matrix. Using \eqref{numpositive} with $A =B$, we get
\begin{equation}\label{idlowwer}
\#(\mathbb{N}^d\cap tB\mathcal{D})\geq 2^{-d}|D|t^d-Ct^{d-1}
\end{equation}
for sufficiently large $t$, where $C=2^{1-d}d(\prod_{j=1}^{d}|\mathcal{D}_j|)^{1/d}$.

For every sufficiently large $t$ let $A(t)$ be a fixed optimal stretching matrix (see \eqref{def-At}). Thus
\begin{equation}\label{lll}
\#(\mathbb{N}^d\cap tB\mathcal{D})\leq\#(\mathbb{N}^d\cap tA(t)\mathcal{D}).
\end{equation}
Denote $a_*(t)=\|A(t)^{-1}\|_\infty$. Then
\begin{equation*}
t/a_*(t)\geq 1,
\end{equation*}
otherwise $tA(t)\mathcal{D}$ does not contain any positive lattice point. Therefore Proposition \ref{2term-prop} gives
\begin{equation}\label{optimalupper}
\#(\mathbb{N}^d\cap tA(t)\mathcal{D})\leq 2^{-d}|\mathcal{D}|t^d-ca_*(t)t^{d-1}
\end{equation}
for some constant $c>0$ depending only on $\mathcal{D}$. Combining \eqref{idlowwer}, \eqref{lll}, and \eqref{optimalupper} yields
\begin{equation*}
a_*(t)\leq C/c,
\end{equation*}
which means that for sufficiently large $t$ any optimal stretching factor $A(t)$ has its $a_*$ bounded uniformly from above.

Then \eqref{numpositive} gives
\begin{align*}
\#(\mathbb{N}^d\cap tA(t)\mathcal{D})&=2^{-d}|\mathcal{D}|t^d-2^{-d}\sum_{j=1}^d a_j(t)^{-1}|\mathcal{D}_j|t^{d-1}\\
&\quad +O(t^{(d-1)(1-\frac{1}{\omega})}+t^{d-2+\frac{2}{d+1}})
\end{align*}
and
\begin{align*}
\#(\mathbb{N}^d\cap tB\mathcal{D})&=2^{-d}|\mathcal{D}|t^d-2^{-d}d\sqrt[\leftroot{-3}\uproot{3}d]{|\mathcal{D}_1||\mathcal{D}_2|\cdots|\mathcal{D}_d|}t^{d-1}\\
&\quad +O(t^{(d-1)(1-\frac{1}{\omega})}+t^{d-2+\frac{2}{d+1}}).
\end{align*}
Combining these two asymptotics with \eqref{lll} yields
\begin{equation*}
\sum_{j=1}^d a_j(t)^{-1}\frac{|\mathcal{D}_j|}{\sqrt[\leftroot{-3}\uproot{3}d]{|\mathcal{D}_1||\mathcal{D}_2|\cdots|\mathcal{D}_d|}}\leq d+O\left(t^{-\frac{d-1}{\omega}}+t^{-\frac{d-1}{d+1}}\right).
\end{equation*}
Then \eqref{conv-rate} follows easily from Lemma \ref{elem-lemma}. This finishes the proof.


\appendix

\section{Computation of the volume of $\mathcal{D}\cap\mathbb{R}^d_+$}\label{volume}

In this part we show that the volume of $\mathcal{D}\cap\mathbb{R}_+^d$ is
\begin{equation}\label{VolD}
V(\omega_1, \ldots, \omega_d):=\left(\sum_{l=1}^d \frac{\omega_1 \omega_2\cdots \omega_d}{\omega_l}\right)^{-1}
\frac{\Gamma(\frac{1}{\omega_1})\Gamma(\frac{1}{\omega_2})\cdots
\Gamma(\frac{1}{\omega_d})}{\Gamma(\sum_{l=1}^d\frac{1}{\omega_l})}.
\end{equation}
As a consequence, the $|\mathcal D_j|$'s in Theorem \ref{main-thm} are explicitly given by
$$|\mathcal D_j|=2^{d-1}V(\omega_1, \ldots, \omega_{j-1}, \omega_{j+1}, \ldots, \omega_d).$$

To prove (\ref{VolD}), we denote $V_{\mathcal{D}}=|\mathcal{D}\cap\mathbb{R}_{+}^d|$ and we change coordinates from $x_l$ to $y_l=x_l^{k_l}$, where $k_l=\omega_l/2$. Then we have
\[\aligned
V_{\mathcal{D}} & =\int_{\{x\in \mathbb{R}^d:x_1^{\omega_1}+\cdots+x_d^{\omega_d}\le1\}\cap\mathbb{R}^d_{+}}\textrm{d}x_1\cdots\textrm{d}x_d
\\ &=\int_{\{y\in \mathbb{R}^d:y_1^{2}+\cdots+y_d^{2}\le1\}\cap\mathbb{R}^d_{+}}\prod_{l=1}^d\frac{1}{k_l}y_l^{\frac{1}{k_l}-1}\textrm{d}y_1\cdots\textrm{d}y_d.
\endaligned\]
The last integral can be computed in polar coordinates $(r,\theta_1, \ldots, \theta_{d-1})$, where $r=|y|=\sqrt{y_1^2+y_2^2+\cdots+y_d^2}$ such that $y_m=r\cos\theta_m\prod_{l=1}^{m-1}\sin\theta_l$
for $1\le m\le d-1$ and $y_{d}=r\prod_{l=1}^{d-1}\sin\theta_l.$ We then get
\begin{align*}
V_{\mathcal{D}}=&\int_0^1\!\!\!\int_0^{\frac{\pi}{2}}\!\!\!\cdots\!\!\!\int_0^{\frac{\pi}{2}}\frac{1}{k_1}(r\cos \theta_1)^{\frac{1}{k_1}-1}\frac{1}{k_2}(r\sin \theta_1\cos\theta_2)^{\frac{1}{k_2}-1}\cdots
\\&\frac{1}{k_m}(r\cos\theta_m\prod_{l=1}^{m-1}\sin\theta_l)^{\frac{1}{k_m}-1}
\cdots\frac{1}{k_d}(r\prod_{l=1}^{d-2}\sin\theta_l)^{\frac{1}{k_d}-1}
\\&r^{d-1}\sin^{d-2}\theta_1\sin^{d-3}\theta_2\cdots\sin\theta_{d-2}
\textrm{d}r\textrm{d}\theta_1\cdots\textrm{d}\theta_{d-1}.
\end{align*}
A direct computation gives
\begin{align*}
V_{\mathcal{D}}=&(\sum_{l=1}^d \frac{\omega_1 \omega_2\cdots \omega_d}{\omega_l})^{-1}B(\frac{1}{2k_1},\frac 12\! \sum_{l=2}^d\!\frac{1}{k_l})B(\frac{1}{2k_2},\frac 12 \!\sum_{l=3}^d\!\frac{1}{k_l}) \cdots  B(\frac{1}{2k_{d-1}},\frac{{1}}{2k_d})
\\=&(\sum_{l=1}^d \frac{\omega_1 \omega_2\cdots \omega_d}{\omega_l})^{-1}\frac{\Gamma(\frac{1}{\omega_1})\Gamma(\frac{1}{\omega_2})\cdots\Gamma(\frac{1}{\omega_d})}{\Gamma(\sum_{l=1}^d\frac{1}{\omega_l})},
\end{align*}
where $\Gamma(x)$ is the gamma function and $B(x,y)=\Gamma(x)\Gamma(y)/\Gamma(x+y)$ is the beta function.

\section{An elementary lemma}

The following lemma is a generalization of \cite[Lemma 13]{LL16}.

\begin{lemma}\label{elem-lemma}
If $0<\varepsilon<1$ and $s_1, \ldots, s_d$ are positive numbers such that $\prod_{j=1}^d s_j=1$ and $\sum_{j=1}^d s_j\leq d+\varepsilon$, then
\begin{equation}
s_j=1+O(\varepsilon^{1/2}), \quad \textrm{$1\leq j\leq d$}.\label{elem-ineq}
\end{equation}
\end{lemma}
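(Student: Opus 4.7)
The plan is to reduce the statement to a quantitative version of the AM--GM inequality applied one variable at a time. Fix an index $j$. Since $\prod_i s_i = 1$, we have $\prod_{i \neq j} s_i = s_j^{-1}$, so the AM--GM inequality applied to the remaining $d-1$ coordinates gives
\begin{equation*}
\sum_{i \neq j} s_i \;\geq\; (d-1)\Bigl(\prod_{i \neq j} s_i\Bigr)^{1/(d-1)} \;=\; (d-1)\, s_j^{-1/(d-1)}.
\end{equation*}
Combining this with the hypothesis $\sum_i s_i \leq d + \varepsilon$ yields $g(s_j) \leq d + \varepsilon$, where
\begin{equation*}
g(x) \;:=\; x + (d-1)\, x^{-1/(d-1)}, \qquad x > 0.
\end{equation*}

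Next I would analyze $g$. A direct computation shows $g'(x) = 1 - x^{-d/(d-1)}$, so $g$ has a unique critical point at $x = 1$, which is a global minimum with $g(1) = d$. Moreover $g''(1) = d/(d-1) > 0$, so by Taylor expansion $g(x) - d = \tfrac{d}{2(d-1)}(x-1)^2 + o((x-1)^2)$ near $x = 1$. Since $g(x) \to \infty$ as $x \to 0^+$ and as $x \to \infty$, the sublevel set $\{x : g(x) \leq d+1\}$ is a compact subinterval $[\delta_1, \delta_2] \subset (0, \infty)$ depending only on $d$. On this compact interval the smooth function $(g(x) - d)/(x-1)^2$ (extended by continuity to the value $\tfrac{d}{2(d-1)}$ at $x=1$) is strictly positive, hence bounded below by some constant $c = c(d) > 0$. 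Thus
\begin{equation*}
g(x) - d \;\geq\; c\,(x-1)^2 \qquad \text{whenever } g(x) \leq d+1.
\end{equation*}

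Since $\varepsilon < 1$, the bound $g(s_j) \leq d + \varepsilon < d+1$ places $s_j$ in $[\delta_1, \delta_2]$, and the previous display gives $c(s_j - 1)^2 \leq \varepsilon$, so $|s_j - 1| \leq (\varepsilon/c)^{1/2}$, which is \eqref{elem-ineq}. The main (and essentially only) obstacle is establishing the quadratic lower bound $g(x) - d \gtrsim (x-1)^2$ uniformly on the relevant sublevel set; this is routine once one observes that a single application of AM--GM to the complementary indices converts the multivariable constraint $\prod s_i = 1$ into a clean one-variable inequality for each $s_j$ separately.
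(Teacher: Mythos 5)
Your argument is correct, and it takes a genuinely different route from the paper's. The paper substitutes $s_j=\exp(u_j)$, so that the constraint $\prod_j s_j=1$ becomes $\sum_j u_j=0$, and then applies Taylor's formula with Lagrange remainder to get $\sum_j s_j = d+\tfrac12\sum_j e^{c_j}u_j^2$; since the a priori bounds $(d+1)^{1-d}\le s_j\le d+1$ make each $e^{c_j}$ uniformly bounded below, the hypothesis forces $\sum_j u_j^2=O(\varepsilon)$ and hence $u_j=O(\varepsilon^{1/2})$ for all $j$ simultaneously. You instead isolate one coordinate at a time: AM--GM on the complementary indices converts the two constraints into the single-variable inequality $g(s_j)=s_j+(d-1)s_j^{-1/(d-1)}\le d+\varepsilon$, and the quadratic lower bound $g(x)-d\ge c(x-1)^2$ on the compact sublevel set $\{g\le d+1\}$ (justified correctly via the strict global minimum at $x=1$ with $g''(1)=d/(d-1)>0$ and a continuity/compactness argument) finishes the job. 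The paper's version is a bit shorter and dispatches all coordinates in one stroke; yours decouples the coordinates cleanly and reduces everything to an explicit one-variable function of $d$ alone, at the cost of the (routine but necessary) uniform quadratic bound for $g$ near its minimum, which you do establish. Both yield the same $O(\varepsilon^{1/2})$ rate with constants depending only on $d$.
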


\begin{proof}
The assumptions imply that $(d+1)^{1-d}\leq s_j\leq d+1$ for $1\leq j\leq d$, namely, all $s_j$'s are uniformly bounded from
above and below. Denote $s_j=\exp(u_j)$. Then $\sum_{j=1}^{d} u_j=0$. Hence, by Taylor's formula, we have
\begin{equation*}
d+\varepsilon\geq \sum_{j=1}^d s_j=d+\frac{1}{2}\sum_{j=1}^de^{c_j}u_j^2,
\end{equation*}
where each $c_j$ is between $0$ and $u_j$ (which means each $\exp(c_j)$ is between $1$ and $s_j$, thus uniformly bounded from below). Therefore $u_j=O(\varepsilon^{1/2})$, which leads to \eqref{elem-ineq}.
\end{proof}


\subsection*{Acknowledgments}
We would like to express our sincere gratitude to Zuoqin Wang for valuable advices and help and to Rick Laugesen for various suggestions towards improving this exposition.


\end{document}